\newtheorem{definition}{Definition}
\newtheorem{lemma}{Lemma}
\newtheorem{proposition}{Proposition}
\newtheorem{theorem}{Theorem}
\newtheorem{remark}{Remark}
\newtheorem{hypothesis}{Hypothesis}
\begin{document}

\title{A contraction based, singular perturbation approach to near-decomposability in complex systems}
\author{Gabriel D. {Bousquet} and Jean-Jacques E. {Slotine}} 
\affiliation{Nonlinear Systems Laboratory,\\
Massachusetts Institute of Technology, Cambridge, MA 02139, USA}

\begin{abstract}
We revisit the classical concept of near-decomposability in complex systems, introduced by Herbert Simon in his foundational article \emph{The Architecture of Complexity}, by developing an explicit quantitative analysis based on singular perturbations and nonlinear contraction theory. Complex systems are often modular and hierarchic, and a central question is whether the whole system behaves approximately as the ``sum of its parts'', or whether feedbacks between modules modify qualitatively the modules behavior,  and perhaps also generate instabilities. 
We show that, when the individual nonlinear modules are contracting (i.e., forget their initial conditions exponentially), a critical separation of timescales exists between the dynamics of the modules and that of the macro system, below which it behaves approximately as the stable sum of its parts.
Our analysis is fully nonlinear and provides explicit conditions and error bounds, thus both quantifying and qualifying existing results on near-decomposability.

\end{abstract}

\maketitle

The question of modularity and near-decomposability, first popularized by Herbert Simon in his seminal paper \emph{The Architecture of Complexity}~\cite{Simon1962}, has been central to the study of complex systems~\cite{Borner2007,Sporns2015}. 
In Physics, modularity and hierarchy have long been addressed through separation of timescales approaches~\cite{Strogatz2014}, QSSA~\cite{Bowen1963,Segel1989} and perturbation theory. 
With the growing interest in network sciences, in particular systems biology, synthetic biology~\cite{Jayanthi2011,Gyorgy2014} and neurosciences~\cite{Chaudhuri2015,Slotine2001}, where the timescale separation between layers can be quite small, quantitative analysis tools for nonlinear near-decomposable systems are needed. 

Here we revisit the question of near-decomposability from a quantitative viewpoint by  exploiting recent results in singular perturbation theory and nonlinear contraction analysis~\cite{Lohmiller1999,DelVecchio2013}, and provide unifying tools to analyze the robustness and stability of nonlinear near-decomposable systems. 
We will base our discussion on a canonical example loosely inspired from Simon while referring to the more mathematical development of the Appendix where the theorems are rigorously presented.

Consider the dynamic system representing the evolution of temperatures in a perfectly insulated building. The building is composed of $n=2$ floors, each floor has $m=2$ rooms. For the simplicity of exposition, we assume that all rooms have equal heat capacity. The temperature in each room is denoted $x_{ij}$ where $i$ is the floor number and $j$  is the room number. The thermal conductance between rooms on the same floor is much higher than between rooms on separate floors. Overall, the dynamic equation for the evolution of the room temperatures might be
\begin{equation}
  \label{eq:rooms1}
  \begin{aligned}
    \dot x_{11} &= \phantom{-}f(x_{12} - x_{11}) + \epsilon g_1(x_{21} - x_{11})\\
    \dot x_{12} &= -f(x_{12} - x_{11}) + \epsilon g_2(x_{22} - x_{12})\\
    \dot x_{21} &=  \phantom{-}f(x_{22} - x_{21}) - \epsilon g_1(x_{21} - x_{11})\\
    \dot x_{22} &= -f(x_{22} - x_{21}) - \epsilon g_2(x_{22} - x_{12})\\
  \end{aligned}
\end{equation}
where $f,g_1, g_2$ represent possibly nonlinear conductances. $f$ captures the coupling between rooms on the same floor while $g_i$ represent the coupling across floors. When the parameter $\epsilon$ is small, cross-floor coupling is weak and the system is nearly-decomposable.

The intuition suggests that in case of weak cross-floor coupling, on each floor the individual room temperatures, often referred to as microstates or fast variables, should approximately equalize on a fast timescale. In singular perturbation theory, the fast variables are said to converge to the slow manifold.  Later on only the average temperatures, also called macrostates or slow variables, are needed to approximately predict the system's behavior.

It must come at no surprise that stability plays a key role in the present study. The analysis is based on the property of contraction (Definition~\ref{def:contraction}), namely that identical systems started with any two different initial conditions tend towards each other exponentially.  Many complex systems, be they  physical systems, energy networks or information networks, are not stable \emph{stricto sensu} because of the existence of invariants: in our particular example, the total energy $\sum_{ij} x_{ij}$ is conserved and prevents trajectories from converging to each other. While some of our results may still hold for non-contracting systems (Remark~\ref{rq:no_contraction}), it is often useful to reformulate the problem without invariants. In our example, the change of variables $\delta_i = \frac{1}{2}(x_{i2} - x_{i1})$ and $\Delta = \frac{1}{2}\left((x_{22} + x_{21}) - (x_{12} + x_{11})\right)$ both leverages on the invariant to reduce the system's dimensionality  and  transforms system~(\ref{eq:rooms1}) into the standard singular perturbation form~(\ref{eq:ftildegbar}) (\emph{c.f.} Remark~\ref{rq:sp_form}):
\begin{equation}
  \label{eq:rooms2}
  \begin{aligned}
    \dot \delta_{1} &= -f(\delta_1) + \frac{\epsilon}{2} \left(g_2\left({\Delta + \delta_2 - \delta_1}\right) - g_1\left(\Delta + \delta_1 - \delta_2 \right)\right)\\
    \dot \delta_{2} &= -f(\delta_2) + \frac{\epsilon}{2} \left(g_1\left({\Delta + \delta_1 - \delta_2}\right) - g_2\left({\Delta + \delta_2 - \delta_1}\right)\right)\\
    \dot\Delta &= -\epsilon \left(g_1\left({\Delta + \delta_1 - \delta_2}\right) + g_2\left(\Delta + \delta_2 - \delta_1\right)\right)\\
  \end{aligned}
\end{equation}
Note that $\delta_i$ and $\Delta$ are the ``barycentric'' variables for the subsystems and slow system, respectively.

Modular systems such as this one give rise to two questions: is the overall system stable? Does is approximately behave in a modular way, \emph{i.e.} as the sum of its parts? In our example, the modular behavior is as follows: the temperature differences between rooms on the same floor $\delta_i$ converge on a fast timescale to $O(\epsilon)$. On a slow timescale, a good approximation $\bar\Delta$ for the temperature difference between floors evolves following the third line of Equation~(\ref{eq:rooms2}) with $\delta_i=0$. Proposition~\ref{prop:sp_sys} provides criteria guaranteeing stability of the approximate system. The main contribution of this paper, Theorem~\ref{th:sp} provides nonlinear criteria based on contraction and finite-gain L-stability~\cite{Khalil} guaranteeing that the singular perturbation approximation holds while providing explicit error bounds for $\delta_i$ and $\Delta  - \bar\Delta$. Theorem~\ref{th:sp} relies heavily on Lemma~\ref{lem:rob2}. Hypothesis~\ref{hyp:sp} provides an explicit estimate for the minimum timescale separation between the micro and macrostates. This can be used recursively to estimate the timescales in a cascade of modular systems (Remark~\ref{rq:cascade}).

If $f, g_i$ are passive, \emph{i.e.} verify $xf(x)>0$ for all $x\neq 0$, traditional Lyapunov Analysis~\cite{slotine1991applied} is sufficient to prove the system's stability (but not necessarily its approximate dynamics).
Consider however the following situation. Imagine that a small air-conditioning system has been installed between rooms 12 and 22, with the intent to bring the two rooms to equal temperature. Assume now that a mistake has been made and that the thermostats of the two rooms have been switched up. The  reversed air-conditioning system can be seen as a negative conductance and $g_2$ might now take the value $g_2 = -kg_1$ with $k>0$. The building is not passive anymore. For small $\epsilon$ the singular perturbation approximation typically holds but for large $\epsilon$ the approximation breaks down and the system may become unstable. 

As a numerical example, take $f = g_1 = x\mapsto x + \frac{1}{2}\sin x$ and $k = 1/2$. Hypothesis~\ref{hyp:sp} is verified in a constant metric with $\beta_f = 1/2,\beta_g = 1/4,d_f = 3/4, \alpha_{f,x} = \sqrt{2}/4, \alpha_{f,y} = 3/4,d_g = 1, \alpha_{g,x} = \sqrt{2}/2$. Remark~\ref{rq:cascade} indicates $\epsilon_c = \sqrt{2}/7$. In Figures~\ref{fig:0dot5epsc}, \ref{fig:2dot5epsc} and \ref{fig:5epsc}, the time evolution of the system for random initial conditions in $x_{ij} \in [-5,5]$ are plotted for $\epsilon/\epsilon_c = 0.5, 2.5, 5$. For $\epsilon = 0.5\epsilon_c$, the system behaves as expected. For $\epsilon = 2.5\epsilon_c$, there are several equilibria. This is unforeseen by the singular perturbation approximation, in accordance with our results which do not guarantee contraction beyond $\epsilon_c$. For $\epsilon = 5\epsilon_c$, the overall system is often unstable.

The present example is modular in that the fast dynamics $\delta_i$ are weakly coupled with each other: in  Equation~\ref{eq:ftildegbar}, $\tilde f$ is factorizable (Remark~\ref{rq:modular}). It is also important to note (Remarks~\ref{rq:aggregate} and \ref{rq:avg}) that where Simon's describes the influence of the microstates onto the macrostates and across modules as done in an \emph{aggregate way}, it has to be understood in the sense of an aggregation in time $\bar x_{ii}$, not an \emph{a priori} ensemble aggregation of the microstates.

\section{Appendix}
\appendix

\section{General robustness results}
In the following, we assume that all functions are at least continuous and more largely,  have sufficient smoothness. The state $x$ is in $\mathbf{R}^p$ equipped with a norm $|\cdot|$. We define the shorthand $\|\cdot\| = \sup_{\mathbf{R}^p} |\cdot|$. The notation $A\prec B$ (\emph{resp.} $A \preccurlyeq B$) indicates that $B-A$ is positive definite (\emph{resp.} positive semi-definite).
\begin{definition}[Contracting systems]
\label{def:contraction}
  Consider the system $\dot{x} = f(x, t)$. It is said to be
  contracting if all trajectories converge exponentially towards each
  other \cite{Lohmiller1999}. A sufficient condition for contraction is
  that there exist $\beta>0$, called the contraction rate,  and a metric $M(x, t) = \Theta^T\Theta \succ
  0 $  such that
  \[
  F = \dot{\Theta}\,\Theta^{-1} + \Theta\, \nabla \!\!f\, \Theta^{-1}
  \preccurlyeq -\beta I.
  \]
  We assume that the condition number of $\Theta(x, t)$ is bounded and call $\chi$ its maximum. With a slight abuse of language, we will use the term ``metric'' for both $\Theta$ and $M$.

A  system $\dot x = f(x, y, t)$ is partially contracting if it is contracting in $\Theta(x, t)$ for any $y(t)$.
\end{definition}
We will make extensive use of the following robustness property of contracting systems.
\begin{lemma}[Robustness with bounded disturbance]
  Consider the two related dynamic systems
  \begin{equation*}
    \label{eq:rob1}
    \begin{aligned}
      \dot{x} &= f(x, t) + d(x, t)\\
      \dot{x}_0 &= f(x_0, t)\\
    \end{aligned}
  \end{equation*}
with $f$ $\beta, \chi$-contracting and $d$ bounded. Let $R(t) = |x(t) - x_0(t)|$. Then
\[R(t) \le \chi R(0) \exp(-\beta t) + \|d\|\chi/\beta
\]
\end{lemma}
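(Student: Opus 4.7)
The plan is the standard contraction-theory argument via virtual displacements in the metric $\Theta$. First, I would introduce a smooth one-parameter family of trajectories $x_s(t)$, $s\in[0,1]$, interpolating between the unperturbed $x_0(t)$ at $s=0$ and the perturbed $x(t)$ at $s=1$; a convenient choice is to let $x_s$ solve $\dot x_s = f(x_s,t) + s\,d(x(t),t)$ with $x_s(0) = (1-s)\,x_0(0) + s\,x(0)$, so that $x_s|_{s=0}\equiv x_0$ and $x_s|_{s=1}\equiv x$. The virtual displacement $\delta x = \partial_s x_s$ then obeys the variational equation $\dot{\delta x} = \nabla f(x_s,t)\,\delta x + d(x(t),t)$.

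Next, I would transform to the metric via $\delta z = \Theta(x_s,t)\,\delta x$. The equivalent quadratic form of the contraction condition, $\dot M + M\nabla f + \nabla f^T M \preccurlyeq -2\beta M$, gives after a short computation
\[
\tfrac{d}{dt}|\delta z|^2 \le -2\beta\,|\delta z|^2 + 2\,\delta z^T \Theta\, d,
\]
and hence $\tfrac{d}{dt}|\delta z| \le -\beta\,|\delta z| + |\Theta\,d|$. Integrating over $s$, the Riemannian path length $L(t) = \int_0^1 |\delta z(s,t)|\,ds$ satisfies $\dot L \le -\beta L + \|\Theta\|\,\|d\|$, and Gr\"onwall's lemma yields $L(t) \le L(0)\,e^{-\beta t} + \|\Theta\|\,\|d\|/\beta$. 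Returning to the Euclidean distance via $R(t) \le \int_0^1|\delta x|\,ds \le \|\Theta^{-1}\|\,L(t)$ together with $L(0) \le \|\Theta\|\,R(0)$ (from the straight-line initial path), the condition-number bound $\|\Theta\|\,\|\Theta^{-1}\|\le\chi$ delivers the claim.

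The main delicate point is that the contraction inequality is stated along the flow of $f$ alone, whereas the interpolating family $x_s$ evolves under $f + s\,d$; strictly, this produces an additional term proportional to $(\nabla M)\,s\,d$ in the rate of change of $M$ along $x_s$. For a constant metric—the case relevant to the concrete applications later in the paper—this contribution vanishes identically; in the general state-dependent case one must invoke sufficient regularity of $\Theta$ to absorb it into the $\Theta\,d$ contribution already present, possibly at the cost of slightly enlarged numerical constants. Once this wrinkle is handled, the remainder of the argument is a routine Gr\"onwall estimate.
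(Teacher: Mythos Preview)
Your proof is correct and follows essentially the same route as the paper: transform virtual displacements to the metric $\Theta$, derive the differential inequality $\dot L \le -\beta L + \|\Theta\|\,\|d\|$ for the Riemannian path length, integrate via Gr\"onwall, and convert back to $R$ using the condition number $\chi$. You are in fact more explicit than the paper both in constructing the interpolating family $x_s$ and in flagging the subtlety that $\dot\Theta$ is defined along the flow of $f$ rather than $f+s\,d$; the paper simply writes $R=\int|dx|$ and $\delta\dot z = F\,\delta z + \Theta\,\delta d$ without comment on either point.
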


\begin{proof}
$R$ can be expressed in its integral form
\[
R = \left|\int dx \right| = \int |dx|
\]

Using the local transformation $ dx = \Theta^{-1}\delta z$ let $Q(t) = \int|\delta z(t)|$. Using the triangular equality, the following relation is obtained
\begin{equation}
\|\Theta\|^{-1} Q\le R \le \|\Theta^{-1}\|Q.
\label{eq:encadrement}
\end{equation}
The differential dynamics $ d\dot{x} = \nabla f dx + \delta d $ induces $\delta \dot{z} = F\delta z + \Theta\delta d$ and hence
\begin{equation}
 \dot{Q} \le -\beta Q + \|\Theta d\|.
\label{eq:qdot}
\end{equation}
From this, $Q\le Q(0) \exp(-\beta t) + \|\Theta\| \|d\|/\beta$. Utilizing the left-hand side of Equation~(\ref{eq:encadrement}) on $Q(0)$ and the right hand side on $Q$ brings the result.

\end{proof}

\begin{lemma}[Robustness with finite-gain L-stable disturbance]
\label{lem:rob2}
  Consider a system
  \begin{equation}
    \dot{x} = f_0(x, t) + d(x, t)\label{eq:generalizedRobustness}
  \end{equation}
  where $f_0$ is $\beta,\chi$-contracting and $d(x, t)$ is finite-gain L-stable: $|d(x, t)|\le K_0 + K_x|x|$. Assume also that $K_x\le \beta/\chi$ and that the system $\dot{x}_0 = f_0(x_0, t)$ has a forward bounded solution $\|x_0(t)\|\le x_{00}$.
Then
\[
  |x(t) - x_0| \le |x(0) - x_0(0)|\chi e^{-\left(\beta - \chi K_x\right)t} + \chi\frac{K_1 + K_xx_{00}}{\beta - \chi K_x}
  \]
\end{lemma}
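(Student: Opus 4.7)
The plan is to mimic the proof of Lemma~1, with the added twist that the disturbance magnitude itself depends on $x$, which will produce a self-referential differential inequality closed off by the smallness assumption $K_x \le \beta/\chi$.

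First I would introduce the virtual displacement $\delta z = \Theta\, dx$ and the length $Q(t) = \int |\delta z|$ in the contracting metric, exactly as in the previous lemma. The sandwich $\|\Theta\|^{-1} Q \le R \le \|\Theta^{-1}\| Q$ (where $R(t) = |x(t) - x_0(t)|$) still holds, and the differential dynamics $d\dot x = \nabla f_0\, dx + \delta d$ gives $\delta \dot z = F\, \delta z + \Theta \delta d$, hence $\dot Q \le -\beta Q + \|\Theta d\|$ as before.

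The next step is where the linear-growth assumption enters. Using $|d(x,t)| \le K_0 + K_x |x|$ together with the triangle inequality $|x(t)| \le |x_0(t)| + R(t) \le x_{00} + R(t)$, and then bounding $R(t) \le \|\Theta^{-1}\| Q(t)$, one obtains
\[
\|\Theta d\| \le \|\Theta\|\bigl(K_0 + K_x x_{00}\bigr) + \|\Theta\|\,K_x\,\|\Theta^{-1}\|\, Q = \|\Theta\|\bigl(K_0 + K_x x_{00}\bigr) + \chi K_x\, Q,
\]
so the differential inequality becomes
\[
\dot Q \le -(\beta - \chi K_x)\, Q + \|\Theta\|\bigl(K_0 + K_x x_{00}\bigr).
\]
The hypothesis $K_x \le \beta/\chi$ is precisely what guarantees the effective contraction rate $\beta - \chi K_x$ is nonnegative, so Gr\"onwall's inequality yields $Q(t) \le Q(0)\, e^{-(\beta - \chi K_x)t} + \|\Theta\|(K_0 + K_x x_{00})/(\beta - \chi K_x)$.

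Finally, I convert back to $R$ by applying the left side of the sandwich on $Q(t)$ and the right side on $Q(0)$, picking up the factor $\chi = \|\Theta\|\,\|\Theta^{-1}\|$ in front of both the transient and steady-state terms and recovering the stated bound (with $K_0$ in place of the $K_1$ that appears in the statement, presumably a typo). The main obstacle is not technical but conceptual: handling the $x$-dependence of $d$ threatens a circular argument, and the key observation is that this circularity is resolved by bounding $|x|$ via $|x_0| + R$, trading the $K_x |x|$ term for a $\chi K_x$ penalty on the contraction rate, which the assumption $K_x \le \beta/\chi$ absorbs.
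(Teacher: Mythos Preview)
Your argument is correct and follows the paper's proof essentially line for line: bound $|x|$ by $R + |x_0|$, feed this into the $\dot Q$ inequality to obtain the reduced rate $\beta - \chi K_x$, then apply Gr\"onwall and convert back via the sandwich (note you have the left/right sides swapped in your last paragraph---the right side bounds $R(t)$ above and the left side bounds $Q(0)$ above, but you reach the right conclusion). Your observation that $K_1$ should be $K_0$ is also correct; it is a typo in the statement.
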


\begin{proof}
  The triangular inequality is applied to $ x = (x - x_0) + x_0$ to get $|x| \le R + |x_0|$. With the bound on $d$ we obtain
\[
  |\Theta d| \le |\Theta|K_xR + |\Theta| (K_1 + K_xx_{00})
\]
The proof is completed by reinjecting this relation into equation~\ref{eq:qdot} and using the right hand site of equation~\ref{eq:encadrement} to dominate $R$
\[
 \dot{Q} + (\beta - K_x\chi) Q \le |\Theta| (K_1 + K_xx_{00})
\]
in a similar fashion to the previous lemma.
\end{proof}

\section{Robustness of singularly perturbed systems}

\begin{proposition}
\label{prop:sp_sys}
  Assume that Hypothesis~\ref{hyp:ftilde} below holds. Then the system
  \begin{equation}
    \begin{aligned}
      \dot{\tilde x} &= \tilde f(\tilde x,  y, u(\epsilon t))\\
      \dot{ y} &= \epsilon \bar g( y, u(\epsilon t))
    \end{aligned}
\label{eq:spfull}
\end{equation}
is contracting.
\end{proposition}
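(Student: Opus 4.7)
The plan is to exploit the strict cascade structure of~(\ref{eq:spfull}): since $\bar g$ does not depend on $\tilde x$, the $y$-subsystem is autonomous (modulo the exogenous input $u(\epsilon t)$), and only the $\tilde x$-subsystem is driven by $y$. I expect Hypothesis~\ref{hyp:ftilde} to bundle together (i) partial contraction of $\tilde f$ in $\tilde x$ at some rate $\beta_{\tilde f}$ in a metric $\Theta_{\tilde x}$, uniformly in $(y,u)$, (ii) contraction of $\bar g$ in $y$ at some rate $\beta_{\bar g}$ in a metric $\Theta_y$, uniformly in $u$, and (iii) a uniform bound $\|\Theta_{\tilde x}\,\nabla_y\tilde f\,\Theta_y^{-1}\|\le c$ on the cross-sensitivity. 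The strategy is then to assemble a single composite metric for the joint state $(\tilde x,y)$ under which the cascade becomes contracting.

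For that composite metric I would try the weighted block-diagonal form $\Theta=\mathrm{diag}(\Theta_{\tilde x},\lambda\,\Theta_y)$ with $\lambda>0$ to be tuned. Because $\partial\bar g/\partial\tilde x=0$, the Jacobian of the full vector field is upper block-triangular, and after conjugation by $\Theta$ the generalized Jacobian reads
\[
F=\begin{pmatrix} F_{\tilde x} & \lambda^{-1}\,\Theta_{\tilde x}\,\nabla_y\tilde f\,\Theta_y^{-1}\\ 0 & \epsilon\,F_y\end{pmatrix},\qquad F_{\tilde x}\preccurlyeq -\beta_{\tilde f}I,\quad F_y\preccurlyeq -\beta_{\bar g}I,
\]
the factor $\epsilon$ in front of $F_y$ arising from $\dot y=\epsilon\bar g$. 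The off-diagonal block has norm at most $c/\lambda$. A standard Young-type inequality on the symmetrized $F$ then gives $F_{\mathrm{sym}}\preccurlyeq -\beta I$ for some $\beta>0$ as soon as $c^2/\lambda^2<4\epsilon\,\beta_{\tilde f}\,\beta_{\bar g}$, which is always achievable by taking $\lambda$ sufficiently large. This is exactly the contraction condition of Definition~\ref{def:contraction}.

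The main obstacle is the quantitative trade-off on $\lambda$. Because the slow timescale leaves the $y$-block with only an $\epsilon\,\beta_{\bar g}$ margin, the required weight scales like $\lambda\sim\epsilon^{-1/2}$, so the condition number $\chi$ of the composite metric degrades as $\epsilon\to 0$ and will have to be tracked carefully when this qualitative contraction result is converted into the quantitative singular-perturbation estimates of Theorem~\ref{th:sp}. A secondary complication arises if Hypothesis~\ref{hyp:ftilde} provides the cross-sensitivity bound only on a forward-invariant set rather than globally; in that case I would first establish forward invariance by applying the bounded-disturbance robustness lemma to $\tilde x$ driven by the trajectory $y(t)$ of the already-contracting autonomous slow subsystem, and only then carry out the metric construction above on that invariant set.
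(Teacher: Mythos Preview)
Your approach is correct and coincides with the paper's: both exploit the cascade (hierarchic) structure by choosing a block-diagonal metric with a tunable relative weight between the $\tilde x$- and $y$-blocks, and then verify negative definiteness of the symmetrized generalized Jacobian. The paper weights the $\tilde x$-block by $\epsilon^{1/2}\nu$ with $\nu$ taken small (equivalent, up to an overall scale, to your $\lambda\sim\epsilon^{-1/2}$ on the $y$-block) and closes with a Schur-complement argument where you use a Young-type bound, but these are interchangeable here.
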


\begin{hypothesis}
The $ y$ dynamics is contracting with metric $\Theta_y$.
 The system $ \tilde f(\tilde x,  y, u(\epsilon t))$ is partially contracting in $\tilde x$ in the metric $\Theta_x$. $B = \frac{1}{2} \Theta_x \nabla_y\tilde f \Theta_y^{-1}$ is uniformly bounded. The symmetric part of the generalized Jacobian for the $ y$ dynamics $F_y^s$ is bounded below.
\label{hyp:ftilde}
\end{hypothesis}

\begin{proof}
The system is hierarchic~\cite{Slotine2003}. With the state $(\tilde x,  y)$ it is contracting in the metric  $\Theta = \text{diag} (\epsilon^{1/2}\nu\Theta_x, \Theta_y)$ for sufficiently small $\nu$. Indeed, the symmetric part of the  generalized Jacobian is
\begin{equation}
  \begin{pmatrix}
    F_x^s & \epsilon^{1/2}\nu B\\
     * & \epsilon F_y^s\\
  \end{pmatrix}
\end{equation}
Using Schurr's complement, it is negative definite iff $F_x^s - \nu^2 B^T\left(F_y^s\right)^{-1}B\prec 0$, which holds for $\nu$ sufficiently small, since by hypothesis $B$ and $\left(F_y^s\right)^{-1}$ are bounded.
\end{proof}

\begin{lemma}
  Consider the two systems:
  \begin{equation}
    \begin{aligned}
      \dot{\tilde x} &= \tilde f(\tilde x, y, u(\epsilon t)) + \epsilon \delta f(\tilde x, y, u(\epsilon t))\\
      \dot{y} &= \epsilon \bar g (y, u(\epsilon t)) + \epsilon \delta g(\tilde x, y, u(\epsilon t))
    \end{aligned}
    \label{eq:ftildegbar}
  \end{equation}
and
\begin{equation}
  \begin{aligned}
    \dot{\bar y} &= \epsilon \bar g(\bar y, u(\epsilon t))
    \label{eq:ybardot}
  \end{aligned}
\end{equation}
with initial condition $(O(\epsilon), y_0 + \Delta)$ and $y_0$ respectively.
Assume that Hypothesis~\ref{hyp:sp} stated below holds.

Then, for all $t$, $\tilde x = O(\epsilon)$. After decay of the transient $e^{-\epsilon\beta_g t}$,   $y - \bar y =  \tilde y = O(\epsilon)$.
\label{lem:Oepscvg}
\end{lemma}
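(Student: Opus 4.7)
The plan is a two-step application of Lemma~\ref{lem:rob2}. First I bound the fast variable $\tilde x$ against the reference $x_0 \equiv 0$; second I bound the slow error $\tilde y = y - \bar y$ by feeding the $\tilde x$-bound into the perturbation acting on the $y$-equation. Partial contraction from Hypothesis~\ref{hyp:sp} breaks what would otherwise be a circular dependence: the fast dynamics is contracting uniformly in whatever $y(t)$ the slow subsystem produces, so I can control $\tilde x$ without first fully controlling $y$; the preliminary forward boundedness of $y$ needed to apply Lemma~\ref{lem:rob2} comes from Proposition~\ref{prop:sp_sys}.

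For Step~1, the canonical singular-perturbation form of equation~(\ref{eq:ftildegbar}) forces $\tilde f(0, y, u) \equiv 0$, so $x_0(t) \equiv 0$ is a trivially forward-bounded solution of the unperturbed partially-contracting system $\dot x_0 = \tilde f(x_0, y(t), u(\epsilon t))$. I take this as the reference in Lemma~\ref{lem:rob2} with disturbance $d = \epsilon\,\delta f(\tilde x, y, u)$. Hypothesis~\ref{hyp:sp} provides an L-stable bound of the form $|\delta f| \le d_f + \alpha_{f,x}|\tilde x| + \alpha_{f,y}|y|$; using forward boundedness of $y$, the $|y|$-term is absorbed into the constant part to give the Lemma~\ref{lem:rob2} format with $K_0 = O(\epsilon)$ and $K_x = \epsilon\alpha_{f,x}$. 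For $\epsilon$ small enough the small-gain condition $\chi_f\,\epsilon\,\alpha_{f,x} \le \beta_f$ holds, and since $|\tilde x(0)| = O(\epsilon)$ the exponentially-decaying transient piece is also $O(\epsilon)$. Conclusion: $|\tilde x(t)| = O(\epsilon)$ for all $t \ge 0$.

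For Step~2, I apply Lemma~\ref{lem:rob2} to the slow equation using $\bar y$ from equation~(\ref{eq:ybardot}) as the reference; $\bar y$ is forward bounded by contraction of $\bar g$. The disturbance is $d = \epsilon\,\delta g(\tilde x, y, u)$. By construction of the canonical form $\delta g(0, y, u) \equiv 0$, so Lipschitz continuity gives $|\delta g| \le \alpha_{g,x}|\tilde x|$; combined with the Step~1 bound this yields $\|d\| = O(\epsilon^2)$. With effective contraction rate $\epsilon\beta_g$ and trivial $y$-gain $K_y = 0$, Lemma~\ref{lem:rob2} produces
\[
|y(t) - \bar y(t)| \le \chi_y|\Delta|\,e^{-\epsilon\beta_g t} + \chi_y\,O(\epsilon^2)/(\epsilon\beta_g),
\]
whose second term is $O(\epsilon)$, exactly the asserted bound once the slow transient $e^{-\epsilon\beta_g t}$ has decayed.

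The main obstacle I anticipate is the constant bookkeeping: one must verify that the small-gain conditions of Lemma~\ref{lem:rob2} hold \emph{simultaneously} at the same $\epsilon$ in both steps, and that the hypothesis constants $d_f, d_g, \alpha_{\cdot,\cdot}$ are $\epsilon$-independent so the $O(\epsilon)$ scalings compose properly through the fast-to-slow coupling. The only genuinely nontrivial input beyond the two invocations of Lemma~\ref{lem:rob2} is the a priori forward boundedness of $y$, which is why Proposition~\ref{prop:sp_sys} is effectively invoked as a prerequisite rather than merely cited as motivation.
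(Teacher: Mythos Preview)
Your two-step architecture is right in spirit, but Step~1 rests on an assertion that does not follow from the cited result. You write that ``the preliminary forward boundedness of $y$ \ldots\ comes from Proposition~\ref{prop:sp_sys}.'' Proposition~\ref{prop:sp_sys} concerns the \emph{unperturbed} cascade~(\ref{eq:spfull}), assumes Hypothesis~\ref{hyp:ftilde} (not Hypothesis~\ref{hyp:sp}), and only asserts contraction in a metric whose rate and condition number are left implicit (they depend on the auxiliary parameter $\nu$ and degrade like $O(\epsilon)$ through the slow block). It does not deliver an $\epsilon$-uniform a priori bound on $y$ for the \emph{perturbed} system~(\ref{eq:ftildegbar}). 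Without that bound you cannot legitimately absorb $\alpha_{f,y}|y|$ into the constant $K_0$; and if you try to manufacture the bound by applying Lemma~\ref{lem:rob2} to the full state against the reference $(0,\bar y)$ using Proposition~\ref{prop:sp_sys}'s contraction, you end up needing exactly the information you were trying to derive, with constants that are not uniform in $\epsilon$.

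The paper closes this loop differently: it introduces the running maxima $M_{\tilde x}(t)$ and $M_{\tilde y}(t)$, applies the robustness lemma to each line of~(\ref{eq:ftildegbar}) to get a pair of \emph{coupled} inequalities (the first involves $M_y\le \bar M + M_{\tilde y}$, the second involves $M_{\tilde x}$), and then solves that $2\times 2$ algebraic system for $M_{\tilde x}$. This is precisely where the full small-gain condition of Hypothesis~\ref{hyp:sp},
\[
\epsilon\,\frac{\chi_f}{\beta_f}\Bigl(\alpha_{f,x} + \frac{\chi_g}{\beta_g}\,\alpha_{f,y}\,\alpha_{g,x}\Bigr) < 1,
\]
enters: the cross term $\frac{\chi_g}{\beta_g}\alpha_{f,y}\alpha_{g,x}$ accounts for the feedback $\tilde x \to y \to \tilde x$ that your argument suppresses by assuming $y$ bounded up front. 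Your weaker condition $\epsilon\chi_f\alpha_{f,x}<\beta_f$ is a symptom of the gap, not a simplification. Once $M_{\tilde x}=O(\epsilon)$ is established this way, boundedness of $\tilde y$ and $y$ falls out, and Step~2 proceeds as you describe. A minor additional point: your claim $\delta g(0,y,u)\equiv 0$ is true after the change of variables of Remark~\ref{rq:sp_form}, but Hypothesis~\ref{hyp:sp} as stated allows a nonzero offset $d_g$, so you should either invoke the canonical form explicitly or carry $d_g$ through.
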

\begin{hypothesis}
\label{hyp:sp}
$\tilde f$ is partially $(\chi_f, \beta_f)$-contracting with respect to $\tilde x$ and $\tilde x = 0$ is an equilibrium for all $y, u(\epsilon t)$. $\bar g(y, u(\epsilon t))$ is $(\chi_g, \beta_g)$-contracting. The solution of system~(\ref{eq:ybardot}) is bounded $|\bar y(t)|\le \bar M$. $u$ is bounded. The perturbations are bounded in the following way:
  $|\delta f| \le d_f + \alpha_{f,x} |\tilde x| + \alpha_{f, y} |y|$
and
  $|\delta g| \le d_g+  \alpha_{g,x} |\tilde x| $.
Furthermore, $\epsilon\frac{\chi_f}{\beta_f}\left(\alpha_{f,x} +\frac{\chi_g}{\beta_g}\alpha_{f,y}\alpha_{g,x}\right)<1$.
\end{hypothesis}

\begin{proof}
  There  is a maximal solution on $t \in \mathbf{R}^+$. Let $M_{\tilde x}(t)= \max_{\tau\le t} |\tilde x(\tau)|$ and similarly define $M_y(t)$ and $M_{\tilde y}(t)$. $0$ is solution of the unperturbed $\dot{\tilde x} = \tilde f(\tilde x, y, u(\epsilon t))$. The robustness property applied to the two lines of Equation~(\ref{eq:ftildegbar}) gives on $[0, t]$

  \begin{equation}
    \begin{aligned}
      |\tilde x(t)| &\le \chi_f|\tilde x_0| + \frac{\epsilon\chi_f}{\beta_f}(d_f + \alpha_{f,x} M_{\tilde x}(t) + \alpha_{f, y} M_y(t)\\
      |\tilde y(t)| &\le \chi_g|\Delta| + \frac{\chi_g}{\beta_g} \left( d_g + \alpha_{g,x} M_{\tilde x}(t)\right)\\
    \end{aligned}
\end{equation}
Note that $M_y(t)\le \bar M + M_{\tilde y}(t)$. Taking the supremum of the expression and reorganizing we get after some algebra
\begin{equation}
  \begin{aligned}
    M_{\tilde x}(t)&\le 
\frac{
  \chi_f |\tilde x_0| + \frac{\epsilon\chi_f}{\beta_f}
  \left(d_f + \alpha_{f,y}(\bar M + \chi_g \left(|\Delta| + d_g/\beta_g\right)\right)
}{
  1 - \epsilon\frac{\chi_f}{\beta_f}\left(\alpha_{f,x} +\frac{\chi_g}{\beta_g}\alpha_{f,y}\alpha_{g,x}  \right)
}
  \end{aligned}
\end{equation}
$M_{\tilde x}(t)$ is increasing and bounded, therefore it converges to a finite limit $M_{\tilde x}$. The boundedness of $\tilde x$ induces that of $\tilde y$ and $y$. $\tilde x_0 = O(\epsilon)$ implies $M_{\tilde x} = O(\epsilon)$.
Finally, the robustness property with bounded disturbances can be applied to the $y$ dynamics:
$|\tilde y|\le \chi_g|\Delta|e^{-\epsilon\beta_g t} + \frac{\chi_g\alpha_{g,x} M_{\tilde x}}{\beta_g}$ which becomes $O(\epsilon)$ after the transient.
\end{proof}

\begin{lemma}
\label{lem:sp_fast}
  Consider the system~(\ref{eq:ftildegbar}) with initial conditions $(\tilde x_0, y_0)$. Assume that Hypothesis~\ref{hyp:sp} holds. Then at $t = \log(1/\epsilon)/\beta_f$, the system has evolved to $(O(\epsilon), y_0 + O(\epsilon t))$.
\end{lemma}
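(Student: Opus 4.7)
The plan is to prove the two claims separately: (i) the slow variable $y$ barely moves on the window $[0,t^\ast]$ with $t^\ast = \log(1/\epsilon)/\beta_f$, because $|\dot y|=O(\epsilon)$ once both states are bounded; and (ii) the fast variable $\tilde x$ contracts exponentially at rate essentially $\beta_f$ toward the frozen equilibrium $\tilde x=0$, so that at time $t^\ast$ the transient $\chi_f|\tilde x_0|e^{-\beta_f t^\ast}=\chi_f|\tilde x_0|\epsilon$ is already $O(\epsilon)$, while the persistent residual generated by the $\epsilon\,\delta f$ perturbation is $O(\epsilon)$ for the same reasons as in Lemma~\ref{lem:Oepscvg}.

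First I would establish an a~priori global bound on $\tilde x$ and $y$ by rerunning the self-referential supremum argument in the proof of Lemma~\ref{lem:Oepscvg}, with $M_{\tilde x}(t)=\max_{\tau\le t}|\tilde x(\tau)|$, $M_{\tilde y}(t)=\max_{\tau\le t}|y(\tau)-\bar y(\tau)|$, and now without the assumption $|\tilde x_0|=O(\epsilon)$. The same coupled inequalities arise, and their denominator $1-(\epsilon\chi_f/\beta_f)(\alpha_{f,x}+(\chi_g/\beta_g)\alpha_{f,y}\alpha_{g,x})$ is positive by Hypothesis~\ref{hyp:sp}. This yields finite constants $X_\infty, Y_\infty$ (depending on $|\tilde x_0|$, $|y_0-\bar y(0)|$, $\bar M$ and the hypothesis constants) such that $|\tilde x(t)|\le X_\infty$ and $|y(t)|\le Y_\infty$ for all $t\ge 0$.

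Next I would apply Lemma~\ref{lem:rob2} to the fast equation written as $\dot{\tilde x}=\tilde f(\tilde x,y(t),u(\epsilon t))+\epsilon\,\delta f(\tilde x,y(t),u(\epsilon t))$, treating $y(t)$ as an exogenous signal. The unperturbed $\tilde f$--dynamics is $(\chi_f,\beta_f)$-contracting in $\tilde x$ and admits the identically-zero forward-bounded solution (with $x_{00}=0$), since $\tilde f(0,y,u)=0$ for every frozen $y,u$. Using the a~priori bound $|y(t)|\le Y_\infty$, the perturbation is finite-gain L-stable with $K_0=\epsilon(d_f+\alpha_{f,y}Y_\infty)$ and $K_x=\epsilon\alpha_{f,x}$; the requirement $K_x\le\beta_f/\chi_f$ follows from Hypothesis~\ref{hyp:sp} for small~$\epsilon$. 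Lemma~\ref{lem:rob2} then gives
\[
|\tilde x(t)|\;\le\;\chi_f|\tilde x_0|\exp\!\bigl(-(\beta_f-\epsilon\chi_f\alpha_{f,x})\,t\bigr)\;+\;\chi_f\,\frac{\epsilon(d_f+\alpha_{f,y}Y_\infty)}{\beta_f-\epsilon\chi_f\alpha_{f,x}}.
\]

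Evaluating at $t^\ast=\log(1/\epsilon)/\beta_f$, the transient factor equals $\epsilon\cdot\exp\!\bigl((\chi_f\alpha_{f,x}/\beta_f)\,\epsilon\log(1/\epsilon)\bigr)$, which converges to $\epsilon$ because $\epsilon\log(1/\epsilon)\to 0$; the second term is manifestly $O(\epsilon)$. Hence $\tilde x(t^\ast)=O(\epsilon)$. Finally, integrating the $y$ equation using $|\dot y|\le \epsilon(|\bar g(y,u)|+d_g+\alpha_{g,x}|\tilde x|)\le \epsilon C$ on $[0,t^\ast]$, with $C$ depending only on $X_\infty$, $Y_\infty$, $\|u\|$ and the bound of $\bar g$ on the resulting compact set, yields $|y(t^\ast)-y_0|\le \epsilon C t^\ast=O(\epsilon t^\ast)$. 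The main subtlety is the sanity check at $t^\ast$: the contraction rate is slightly inflated by the perturbation from $\beta_f$ to $\beta_f-\epsilon\chi_f\alpha_{f,x}$, and one has to verify that $\epsilon^{-\epsilon\chi_f\alpha_{f,x}/\beta_f}$ stays $O(1)$, so that the chosen logarithmic waiting time is still long enough to bring the transient down to the $O(\epsilon)$ floor without being so long that the slow variable has drifted by more than $O(\epsilon t^\ast)$.
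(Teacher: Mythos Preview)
Your argument is correct. It shares the paper's skeleton---first obtain an a~priori bound on $y$ (hence on $\epsilon\,\delta f$), then invoke the contraction/robustness of the $\tilde x$--dynamics to get exponential decay plus an $O(\epsilon)$ floor, and finally integrate $\dot y$---but it differs in how the $y$--bound is produced.

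You obtain global bounds $X_\infty,Y_\infty$ by rerunning the coupled-supremum inequality of Lemma~\ref{lem:Oepscvg}, which needs the $(\chi_g,\beta_g)$--contraction of $\bar g$ and the boundedness of $\bar y$; then Lemma~\ref{lem:rob2} applies directly and the check $\epsilon^{-\epsilon\chi_f\alpha_{f,x}/\beta_f}=O(1)$ closes the estimate at $t^\ast$. The paper instead argues locally: from continuity of $\bar g$ it picks a ball $B_{y_0}$ around $y_0$, bootstraps the $\tilde x$ robustness bound together with $|y-y_0|\le\epsilon\int_0^t(\cdot)$ to show the exit time from $B_{y_0}$ is at least $t_0/\epsilon$, and only then applies the exponential robustness bound to $\tilde x$ on $[0,t_0/\epsilon]\supset[0,t^\ast]$. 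Your route is shorter and reuses prior machinery; the paper's route is more self-contained in that it uses only continuity of $\bar g$ (not its contraction rate), which is consistent with Remark~\ref{rq:no_contraction}. Under Hypothesis~\ref{hyp:sp} both are valid and yield the same conclusion.
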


\begin{proof}
  By continuity of $\bar g$ and boundedness of $u$, there is $d_{y_0}, \alpha_{y_0}>0$ and a neighborhood $B_{y_0} = \{y: |y - y_0|\le c_{y_0}\}$ such that  $|\bar g(y, u)|\le  d_{y_0} + \alpha_{y_0}|y - y_0|$ (take for instance $d_{y_0} = \max_u \bar g(y_0, u)$ and $\alpha_{y_0} = 2 \max_u |\frac{\partial \bar g}{\partial y}\scriptstyle{(y_0, u)}|$).

The robustness relation for $\tilde x$ and integration of the $y$ dynamics gives:
  \begin{equation}
    \begin{aligned}
      |\tilde x(t)| &\le \chi_f|\tilde x_0| + \frac{\epsilon\chi_f}{\beta_f}(d_f + \alpha_{f,x} M_{\tilde x}(t) + \alpha_{f, y} M_y(t))\\
     |y(t) - y_0| &\le  \epsilon\int_0^t|\bar g(y(\tau), u)| + \alpha_{g,x} M_{\tilde x}(\tau)d\tau\\
    \end{aligned}
\end{equation}

By continuity, $y$ remains in the neighborhood $B_{y_0}$ at least for some time. Assume that $y$ does leaves the neighborhood at some point and let $t_c$ be the first time when the neighborhood boundary is crossed. Then, 
\[
\begin{aligned}
  c_{y_0} &= \left|\int_0^{t_c}\dot{y}d\tau\right|\\
  &\le \epsilon t_c\bigg( d_{y_0} + \alpha_{y_0}c_{y_0}+ \\
  &\phantom{ \epsilon t_c\bigg( d_{y_0}} +\alpha_{g,x}g\frac{\chi_f |\tilde x_0| + \epsilon\chi_f/\beta_f(d_f + \alpha_{f,y}(|y_0| + c_{y_0})}{1 - \epsilon\chi_f\alpha_{f,x}/\beta_f}     \bigg)\\
&= \epsilon t_c(K_1 + O(\epsilon))\\
\end{aligned}
\]
where $K_1$ is a constant gathering the terms of the second line. Below some critical  $\epsilon_1$, in the last expression the $O(\epsilon)$ can be bounded by some other constant $K_2 = \limsup_{\epsilon\to 0} O(\epsilon)$.
As a consequence, $y$ remains in $B_{y_0}$ at least until $t_c\ge t_0/\epsilon$ with $t_0 = c_{y_0}/K$.

We can now bound 
\[
|y(t) - y_0|\le \epsilon t K, \quad t\le t_0/\epsilon
\]
If $y$ never leaves the $B_{y_0}$ neighborhood, the relation also holds. As a consequence, $y$ can be bounded by a constant quantity for a time arbitrary large $t\sim 1/\epsilon$: $M_y(t_0/\epsilon) \le |y_0| + t_0K$. Such a bound still holds if $y$ never leaves $B_0$. We can now apply once more the robustness property to the $\tilde x$ dynamics in order to obtain the tighter bound
\[
|\tilde x|\le \chi_f |\tilde x_0|e^{-\beta_f t} + O(\epsilon),\quad t\le t_0/\epsilon
\]
At $t = \log(1/\epsilon)/{\beta_f}$, the first terms becomes $O(\epsilon)$. Which proves the result. Note that for $\epsilon$ small enough, this time is smaller than $t_0/\epsilon$.

\end{proof}

\begin{theorem}
\label{th:sp}
  Consider the systems (\ref{eq:ftildegbar}) and (\ref{eq:ybardot}). Assume that Hypothesis \ref{hyp:sp} holds. After a transient $\left(\frac{1}{\beta_f} + \frac{1}{\epsilon\beta_g}\right)\log\frac{1}{\epsilon}$, system~(\ref{eq:ftildegbar}) remains in a $O(\epsilon)$ neighborhood of $(0, \bar y)$. An explicit bound for the neighborhood is shown in the proof of Lemma~\ref{lem:Oepscvg}.
\end{theorem}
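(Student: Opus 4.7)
The strategy is to compose Lemma~\ref{lem:sp_fast} and Lemma~\ref{lem:Oepscvg}, which together cover the two timescales. The first lemma dispatches the fast boundary layer in time $t_1=\log(1/\epsilon)/\beta_f$; the second, starting from an already well-initialized state on the slow manifold, dispatches the slow tracking transient in an additional $t_2=\log(1/\epsilon)/(\epsilon\beta_g)$, and the sum $t_1+t_2$ exactly matches the bound stated in the theorem.

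First I would apply Lemma~\ref{lem:sp_fast} directly to (\ref{eq:ftildegbar}) with its arbitrary initial condition $(\tilde x_0,y_0)$: by time $t_1$ the state has reached $(\tilde x(t_1),y(t_1))$ with $|\tilde x(t_1)|=O(\epsilon)$ and $|y(t_1)-y_0|=O(\epsilon\log(1/\epsilon))$. I would then take $\bar y$ to be the solution of (\ref{eq:ybardot}) initialized at $\bar y(0)=y_0$, the natural reference anchoring the reduced-order model to the true initial slow state. Because $\bar y$ itself drifts only at rate $\epsilon\bar g$, the offset accumulated by $t_1$ is $\Delta:=y(t_1)-\bar y(t_1)=O(\epsilon\log(1/\epsilon))$, a quantity that vanishes with $\epsilon$.

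Second, I would restart the clock at $t_1$ and apply Lemma~\ref{lem:Oepscvg} on $[t_1,\infty)$ with the shifted initial condition $(\tilde x(t_1),\bar y(t_1)+\Delta)$. Its hypotheses hold verbatim: $\tilde x(t_1)=O(\epsilon)$ meets the boundary-layer requirement, $\Delta$ is bounded, and Hypothesis~\ref{hyp:sp} is unaffected by a time shift. The lemma then delivers $|\tilde x(t)|=O(\epsilon)$ for all $t\ge t_1$, and after decay of the exponential slow transient $e^{-\epsilon\beta_g(t-t_1)}$ also $|y(t)-\bar y(t)|=O(\epsilon)$. The transient falls below $O(\epsilon)$ by $t-t_1=t_2$, so the full neighborhood bound holds from time $t_1+t_2=(1/\beta_f+1/(\epsilon\beta_g))\log(1/\epsilon)$ onward. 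The explicit form of the $O(\epsilon)$ neighborhood is read off from the closed-form limit of $M_{\tilde x}$ and from the robustness estimate $|\tilde y|\le \chi_g|\Delta|e^{-\epsilon\beta_g(t-t_1)}+\chi_g\alpha_{g,x}M_{\tilde x}/\beta_g$ already derived inside the proof of Lemma~\ref{lem:Oepscvg}.

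The main obstacle is the bookkeeping at the splice: I must verify that the $O(\epsilon\log(1/\epsilon))$ drift of $y$ accumulated during the fast phase is small enough that the bound $\bar M$ on $|\bar y|$ used by Lemma~\ref{lem:Oepscvg} remains valid, and that the ``$y$ stays in the neighborhood $B_{y_0}$'' guarantee of Lemma~\ref{lem:sp_fast} indeed holds up to $t_1$ (which requires $\epsilon\le\epsilon_1$ as identified there). Since $\epsilon\log(1/\epsilon)\to 0$ and all thresholds are explicit, for $\epsilon$ below some critical value the splice is legitimate and the theorem follows.
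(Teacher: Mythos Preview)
Your proposal is correct and matches the paper's own proof, which consists of a single sentence: ``This is the direct consequence of subsequently applying the Lemmas~\ref{lem:Oepscvg} and \ref{lem:sp_fast}.'' You have supplied exactly the splice the paper leaves implicit---first Lemma~\ref{lem:sp_fast} to collapse the fast variable in time $t_1=\log(1/\epsilon)/\beta_f$, then Lemma~\ref{lem:Oepscvg} on $[t_1,\infty)$ with the resulting $(O(\epsilon),y_0+\Delta)$ state---and your bookkeeping on the $O(\epsilon\log(1/\epsilon))$ drift and the $\epsilon\le\epsilon_1$ threshold is the right way to make the one-line argument rigorous.
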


\begin{proof}
  This is the direct consequence of subsequently applying the Lemmas~\ref{lem:Oepscvg} and \ref{lem:sp_fast}.
\end{proof}

\begin{remark}
\label{rq:no_contraction}
 The contraction property of $\bar g$ is utilized to obtain the bound in $\tilde y$ in Lemma~\ref{lem:Oepscvg}. It is sufficient to assume that $y$ is bounded, perhaps using general passivity properties of the full system, to obtain the bound on $\tilde x$. Note that the boundedness of $y$ is guaranteed if $\dot y = \bar g(y, u)$ is contracting and has \emph{a} bounded solution.
\end{remark}
\begin{remark}
  It is straightforward to obtain explicit bounds in place of the $O(\epsilon)$.
\end{remark}
\begin{remark}
  Hypothesis~\ref{hyp:sp} is not particularly restrictive. In particular it holds for linear systems and finite-gain L-stable systems. In contrast, the hypotheses for  Lemma~1 of \cite{DelVecchio2013} require in essence $\delta f$ to be bounded, while we only require it to be finite-gain L-stable in $\tilde x$.
\end{remark}

\begin{remark}
\label{rq:sp_form}
  In general, a transformation is required to make the condition $\tilde{f}(0, y, u) = 0$ in Hypothesis~\ref{hyp:sp} hold. Consider a system in the form
\begin{equation}
  \begin{aligned}
   \dot x &= f(x, y, u(\epsilon t)) + \epsilon g_{x}(x, y, t)\\
   \dot y &= \epsilon g_{y}(x, y, t)
  \end{aligned}
\label{eq:sys}
\end{equation}
with $f(0, y, u)$ not necessarily 0. Assume however that $f$ is partially contracting in $x$ in  metric $\Theta_{x}$ with rate and condition numbers $\beta_x, \chi_x$.

Assume that $f(x, y, u) = 0$ has a solution $\bar x(y, u)$. The partial contraction of $f$ ensures that it is unique. It represents the equilibrium value of $x$ after suppressing the forcing $\epsilon g_x$ and freezing $y, u$.
After the change of variable $\tilde x = x - \bar x$, system~(\ref{eq:sys}) is transformed into system~(\ref{eq:ftildegbar}) with 
\begin{equation}
\begin{aligned}
  \tilde f(\tilde x, y, u) &= f(\bar x(y, u) + \tilde x, y, u)\\
  \delta f &=  -\frac{\partial\bar x}{\partial y} (\bar g_x(\tilde x, y, u) + \epsilon \delta g) - \frac{\partial\bar x}{\partial u}u'\\
  \bar g(y, u) &= g(y, \bar x(y, u), u)\\
  \delta g &=  g_y(\tilde x + \bar x, y, u, t) - \bar g(y, u, t)\\
\end{aligned}\label{eq:ftoftilde}
\end{equation}

\end{remark}

\begin{remark}
\label{rq:modular}
Modular systems may take the form $x = (x_1,\dots,x_n)$ such that the first line of Equation~(\ref{eq:ftildegbar}) can be factored into
\begin{equation}
   \dot x_i = f_{x_i}(x_i, y, u_i) + \epsilon g_{x_i}(x, y, u)
\end{equation}
The key here is that for each subsystem $i$, $\tilde f_i$ depends only on the local microstates $x_i$ and can be analyzed separately: $\tilde f$ is partially contracting in the metric $\Theta_x = \mathrm{diag}(\Theta_{x_1}, \dots, \Theta_{x_n})$ with $\beta_x = \min\beta_{x_i}$ and $\chi_x = \max\chi_{x_i}$.
\end{remark}

\begin{remark}
  \label{rq:aggregate}
  The influence of the individual $x_i$ \emph{a priori} propagates into the macrostates and other modules through their quasi-steady values $\bar x_i$ (see lines 1 and 3 of Equation (\ref{eq:ftoftilde})). It might be that the $x_i$ only have a group influence if the coupling is not only weak, as is assumed in this work, but also low rank (equivalently of higher order). The latter property is about the  graph connectivity~\cite{Newmann2003}, and is quite different from the property of connection strength discussed here.
\end{remark}

\begin{remark}
\label{rq:avg}
  It can be shown that the important aspect of the micro/macrostate separation is that the microstates converge \emph{on average} to $\bar x_i$. The qualitative behavior still holds if $x_i \to \bar x_i + \delta x_i$ with the time average $\overline{\delta x_i}$ small on a fast timescale. In other words our analysis in the singular perturbation framework extends to the averaging framework.
\end{remark}

\begin{remark}
  \label{rq:cascade}

In Hypothesis~\ref{hyp:sp} it is required that  $\epsilon<\frac{\chi_f}{\beta_f}\left(\alpha_{f,x} +\frac{\chi_g}{\beta_g}\alpha_{f,y}\alpha_{g,x}\right) = \epsilon_c$. As the proof shows, this property is paramount in  guaranteeing the boundedness of $\tilde x$. This constraint is related to small-gain stability. It gives a quantitative estimate of the timescale separation necessary for the the micro/macrostate separation to hold.

If a modular system is constituted of a cascade of modules, we expect the separation between the timescale  $\tau_\mu$ of the smallest system and that of the largest system $\tau_M$, to be $\tau_\mu = (\epsilon_{c_\mu}\dots\epsilon_{c_M})\tau_M$. The estimate  holds even for nonlinear systems.
\end{remark}

\bibliography{library.bib}

\newpage
\begin{figure}
  \centering
  \includegraphics[width=\textwidth]{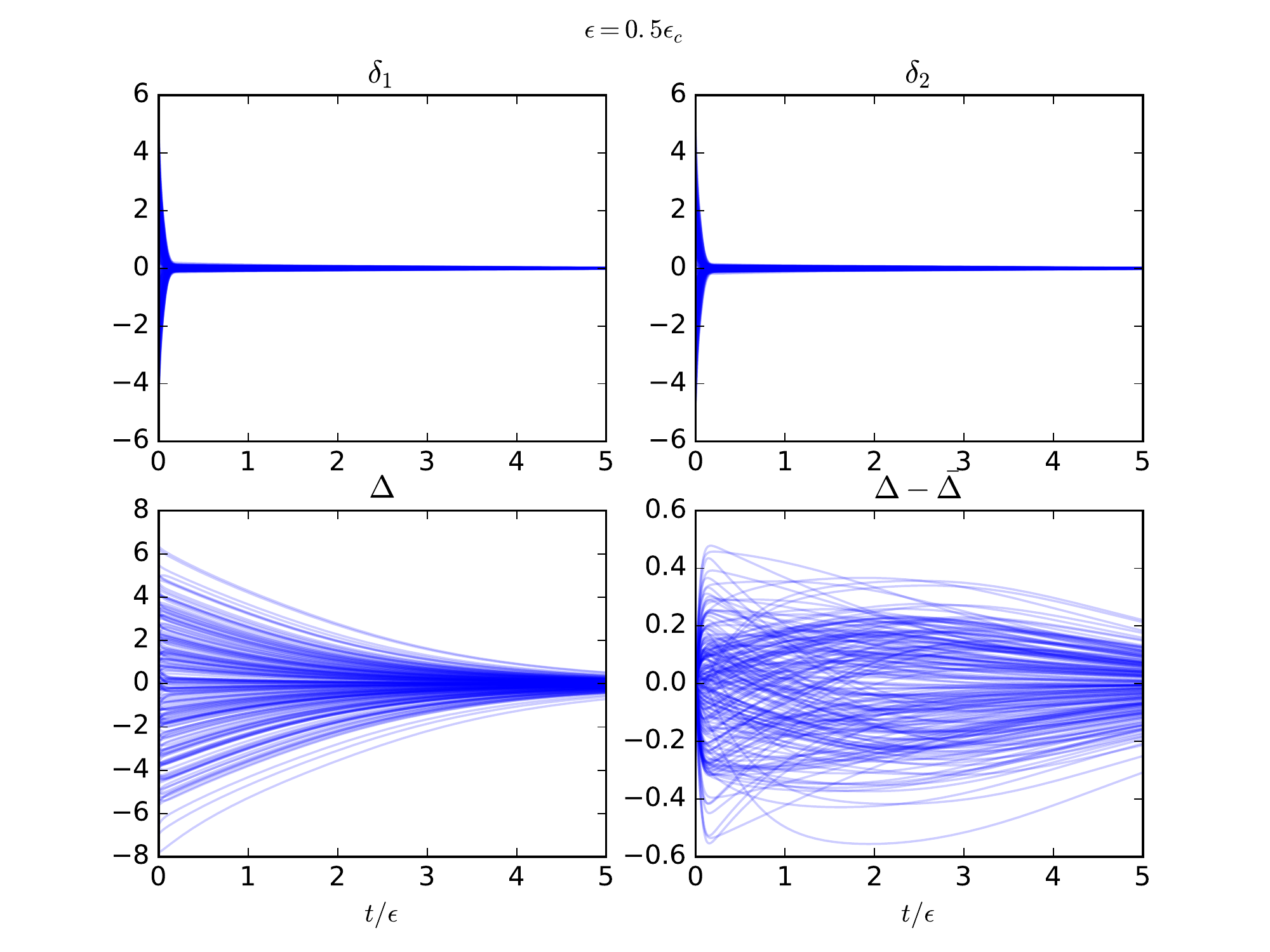}
  \caption{}
  \label{fig:0dot5epsc}
\end{figure}

\begin{figure}
  \centering
  \includegraphics[width=\textwidth]{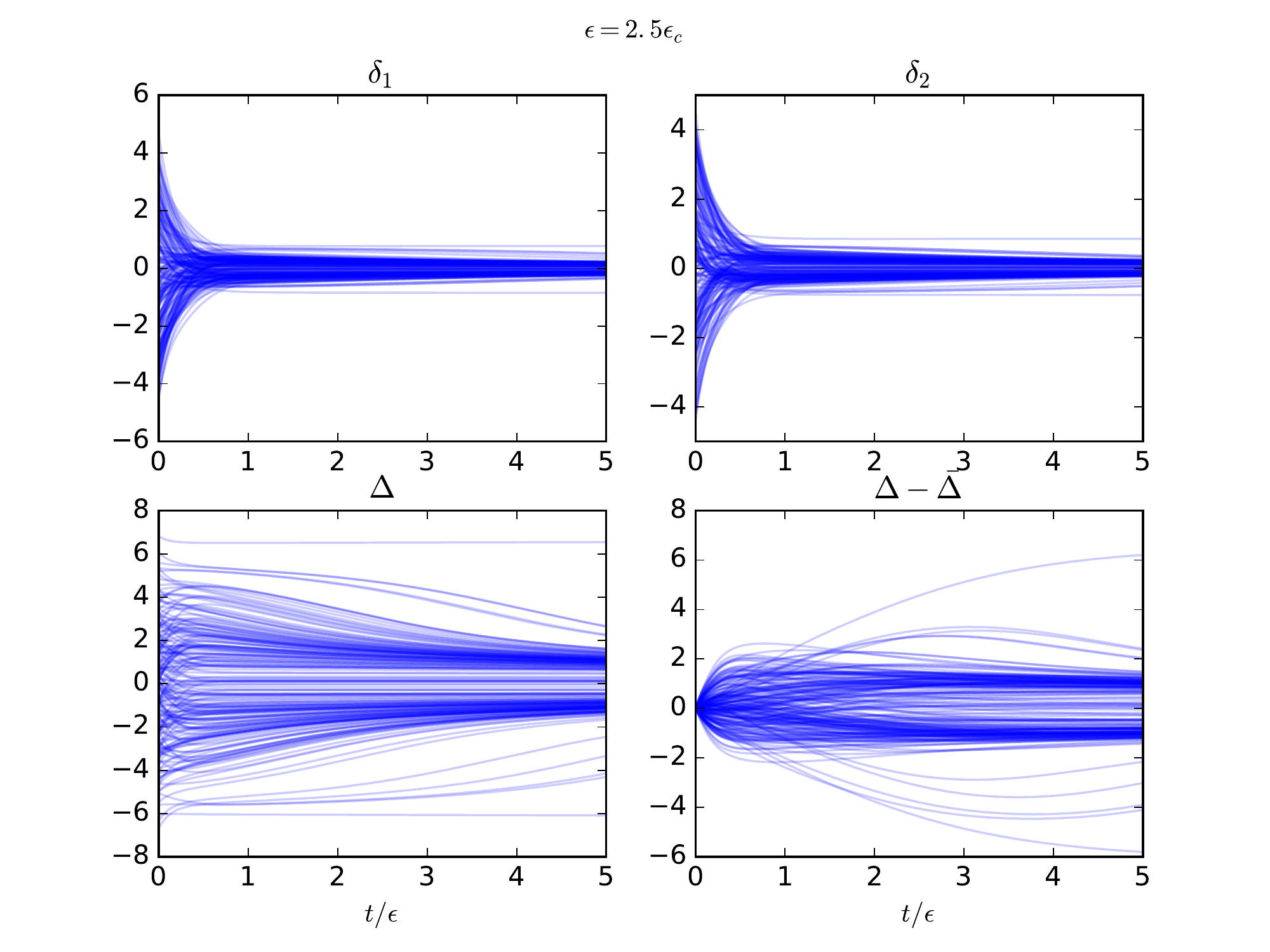}
  \caption{}
  \label{fig:2dot5epsc}
\end{figure}

\begin{figure}
  \centering
  \includegraphics[width=\textwidth]{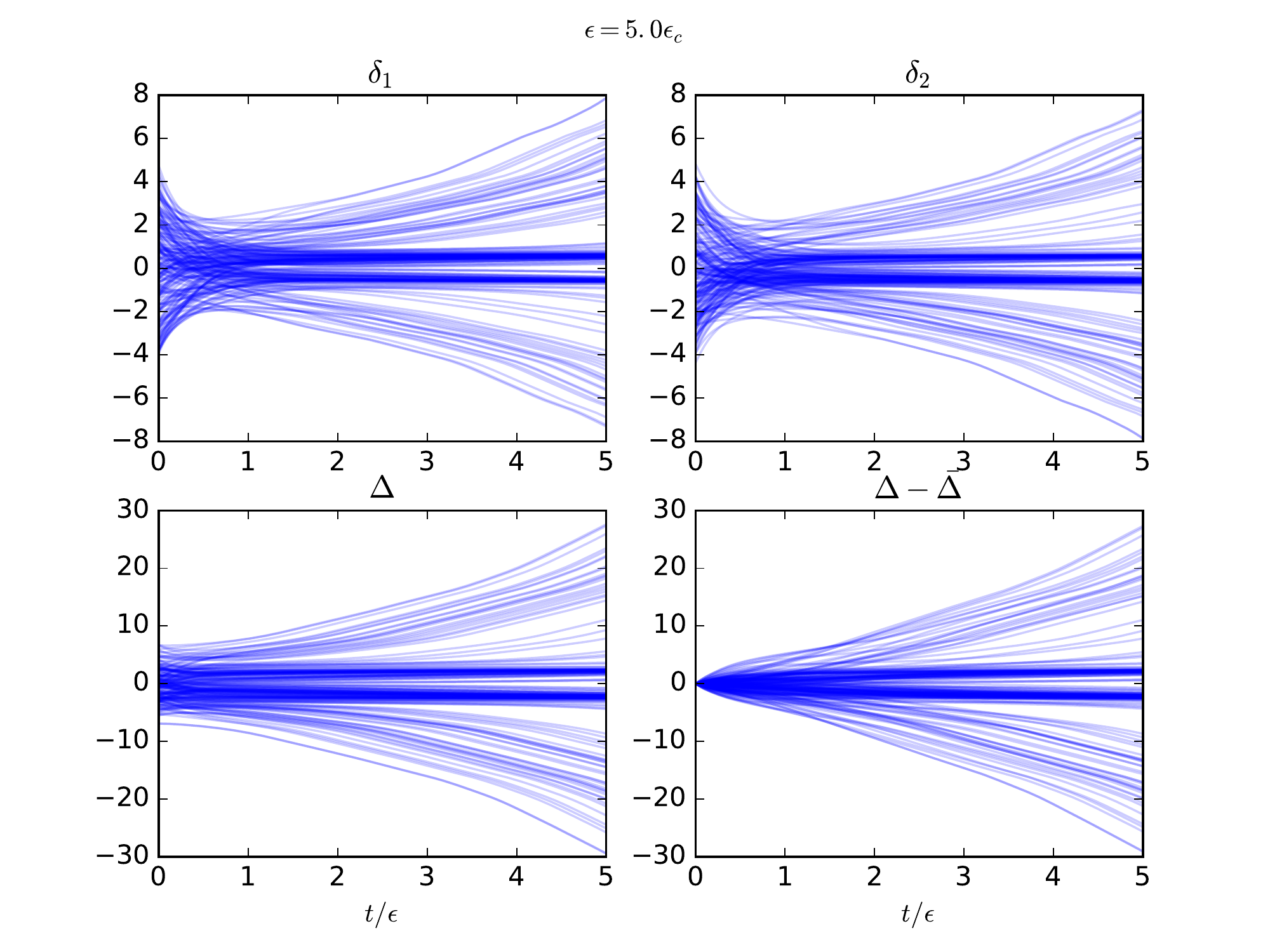}
  \caption{}
  \label{fig:5epsc}
\end{figure}

\end{document}